\numberwithin{equation}{section}
\newtheorem{thm}{Theorem}[section]
\newtheorem{prop}[thm]{Proposition}
\newtheorem{lem}[thm]{Lemma}
\newtheorem{rem}[thm]{Remark}
\newtheorem{dfn}[thm]{Definition}
\newtheorem{prob}[thm]{Problem}
\newtheorem{ex}[thm]{Example}
\numberwithin{equation}{section}
\newcommand{\Z}{\mathbb{Z}}
\newcommand{\Q}{\mathbb{Q}}
\newcommand{\R}{\mathbb{R}}
\newcommand{\C}{\mathbb{C}}
\newcommand{\g}{\mathfrak{g}}
\newcommand{\G}{\mathcal{G}}
\newcommand{\sch}{\mathcal{S}}
\newcommand{\Sp}{S\!p}
\newcommand{\SU}{S\!U}
\newcommand{\rank}{\mathrm{rank}}
\begin{document}

\title{Three presentations of Torus equivariant cohomology of flag manifolds}
\author{Shizuo KAJI}
\date{12, Jan, 2015}
\subjclass[2010]{
Primary 57T15; Secondary 14M15.}
\keywords{equivariant cohomology, flag manifold, Schubert calculus, GKM theory}
\address{Department of Mathematical Sciences,
Faculty of Science, Yamaguchi University,  
1677-1, Yoshida, Yamaguchi 753-8512, Japan /
School of Mathematics, University of Southampton,
    Southampton SO17 1BJ, United Kingdom}
\thanks{The author was partially supported by KAKENHI, Grant-in-Aid for Young
     Scientists (B) 26800043 and JSPS Postdoctoral Fellowships for Research Abroad.}
\email{skaji@yamaguchi-u.ac.jp}

\dedicatory{Dedicated to Professor S.A. Ilori on the occasion
     of his $70$th birthday}

\begin{abstract}
Let $G$ be a compact connected Lie group and
$T$ be its maximal torus.
The homogeneous space $G/T$ is called the (complete) flag manifold.
One of the main goals of the {\em equivariant Schubert calculus} is to
study the $T$-equivariant cohomology $H^*_T(G/T)$
with regard to the $T$-action on $G/T$ by multiplication.
There are three presentations known for $H^*_T(G/T)$;
(1) the free $H^*(BT)$-module generated by the Schubert varieties
(2) (with the rational coefficients) the {\em double coinvariant ring} of the Weyl group
(3) the {\em GKM ring} associated to the Hasse graph of the Weyl group.
Each presentation has both advantages and disadvantages.

In this paper, we describe how to convert an element in one
presentation to another by giving an explicit algorithm,
which can then be used to compute the equivariant structure constants for the product of Schubert classes.
The algorithm is implemented as a Maple script.
\end{abstract}
\maketitle

\section{Introduction}
Let $G$ be a compact connected Lie group and
$T$ be its maximal torus.
The homogeneous space $G/T$ is a smooth projective variety 
called the flag manifold.
Among active studies on the topology and geometry of $G/T$ is 
to understand its cohomology ring $H^*(G/T;\Z)$.
As a module, it is well-known that $H^*(G/T;\Z)$ is 
generated freely over $\Z$ by 
the {\em Schubert classes} $\{X_w\mid w\in W\}$ indexed by the Weyl group $W$ of $G$.
To determine the {\em structure constants} $c_{uv}^w\in \Z$
for the product of two Schubert classes
\[
 X_u \cdot X_v = \sum_{w\in W} c_{uv}^w X_w \quad (u,v \in W)
\]
is one of the central problems in {\em Schubert calculus} (see, for example, \cite{duan,Kleiman,KL}).
One method to compute $c_{uv}^w$ is to use the Borel presentation (\cite{Bor}) of $H^*(G/T;\Q)$,
where elements are represented by polynomials and their product is just the product of
polynomials.
Since $H^*(G/T;\Z)$ is torsion-free, and hence, $H^*(G/T;\Z) \to H^*(G/T;\Q)$ is injective,
one can compute $c_{uv}^w$ as follows:
\begin{enumerate}
\item Find polynomial representatives of $X_u$ and $X_v$ in the Borel presentation.
\item Express the product of the two polynomial as a linear sum of the Schubert classes.
\end{enumerate}
To carry out (1) and (2), we have to know how to
convert an element represented by a linear sum of Schubert classes to a polynomial in the Borel presentation and vice versa.
A conversion method was given by Berstein-Gelfand-Gelfand \cite{BGG} and independently by Demazure \cite{Dem}.

We can consider a similar problem in an equivariant setting.
The flag manifold $G/T$ admits the action of $T$ by multiplication from the left.
The Schubert classes generate the equivariant cohomology $H^*_T(G/T;\Z)$ freely 
as $H^*(BT;\Z)$-module (see \S 2).
The problem of determining the structure constants $c_{uv}^w \in H^*(BT;\Z)$ in
\[
 X_u \cdot X_v = \sum_{w\in W} c_{uv}^w X_w \quad (u,v \in W)
\]
is a generalisation of the one in the ordinary cohomology case:
$c_{uv}^w$ is now a polynomial whose constant term is the structure constant for the ordinary cohomology.
Graham showed \cite{Graham} that $c_{uv}^w$ is a polynomial in
the simple roots with non-negative coefficients.
To compute $c_{uv}^w$, we can use the Borel presentation for $H^*_T(G/T;\Q)$ as in the ordinary cohomology case.
Furthermore, for the equivariant cohomology, we have yet another presentation of $H^*_T(G/T;\Q)$ called the {\em GKM presentation}
\cite{GHZ}, which allows a purely combinatorial treatment.
Again we need a conversion method among those three presentations.
The aim of this paper is to describe an algorithm to convert an element represented in one presentation to another
(\S \ref{algorithm}).
We also implemented our algorithm in the computer algebra system Maple (available at \verb+http://skaji.org/code/+),
and demonstrate it by computing the structure constants (\S \ref{demo}).

\subsection*{Acknowledgements}
The author gratefully acknowledges the referee for his or her helpful comments.

\section{Three presentations of $H^*_T(G/T)$}\label{cohomology}
In this section, we review three presentations of the torus equivariant cohomology of the flag manifold.
We refer the reader to \cite{Ku} for details on the subject.

The left multiplication induces an action of $T$ on $G/T$.
The {\em Borel construction} is the total space of the 
Borel fibration with regard to this action:
\[
 G/T \hookrightarrow ET \times_T G/T \xrightarrow{\,\pi\,} BT,
\]
where $T\hookrightarrow ET \to BT$ is the universal $T$-bundle.
More concretely, 
the Borel construction $ET \times_T G/T$ is the quotient space of $ET\times G/T$
by the equivalence relation 
\[
 (y,gT) \sim (ty,tgT) \quad \text{for } y\in ET, gT \in G/T, t\in T.
\]
The Borel $T$-equivariant cohomology $H^*_T(G/T;\Z)$ is by definition the ordinary cohomology
$H^*(ET\times_T G/T;\Z)$.
It is an algebra over $H^*(BT;\Z)$ through $\pi^*: H^*(BT;\Z) \to H^*(ET\times_T G/T;\Z)$.

The Weyl group $W$ of $G$ is a Coxeter group generated by the simple reflections 
$s_1,\ldots,s_n$, where $n=\rank(G)$. We denote the length of $w\in W$ by $l(w)\in \Z$,
the identity element by $e\in W$, and the longest element by $w_0\in W$.
The group $W$ acts $T$-equivariantly on $G/T$ from the right by
$w \cdot gT = gwT$ since $w$ is represented by a normaliser of $T$.
This induces a left action of $W$ on $H^*_T(G/T;\Z)$.

\subsection{Chevalley presentation}
Let $G_\C$ be the complexification of $G$ and $B$ its Borel subgroup.
By the Iwasawa decomposition, $G_\C/B$ is diffeomorphic to $G/T$.
Let $w_0\in W$ be the longest element and $B^{-} = w_0Bw_0$ be the opposite Borel subgroup to $B$.
The Bruhat decomposition 
\[
 G_\C/B = \bigcup_{w\in W} B^{-}wB/B
\]
is $T$-equivariant, and the closure of the cell $B^{-}wB/B$ defines
the {\em Schubert variety} $X_w$.
Denote by the same symbol $X_w$ the class in $H^*_T(G/T;\Z)$ represented by $X_w$.

\begin{rem}
To be more precise, 
one can use the equivariant Gysin map to the Bott-Samelson resolution \cite{BS}
 or the Borel-Moore homology on an approximation space of the Borel construction \cite[Appendix B]{ful}
 to define the class $X_w$.
\end{rem}
\begin{thm}[\cite{Che}]
 $H^*_T(G/T;\Z)$ is a free $H^*(BT;\Z)$-module generated by the classes $X_w$ indexed by $W$.
 The cohomological degree of $X_w$ is $2l(w)$, twice the length of $w$.  
\end{thm}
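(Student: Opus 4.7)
The plan is to deduce the statement from the non-equivariant Schubert basis via a Leray--Hirsch argument. The Bruhat decomposition $G_\C/B = \bigsqcup_{w\in W} B^{-}wB/B$ is a $T$-stable cell decomposition in which each stratum $B^{-}wB/B$ is a complex affine space whose closure $X_w$ has complex codimension $l(w)$ in $G/T$. Analysing the filtration by unions of closed Schubert varieties (or equivalently the associated long exact sequences of the pairs) shows that $H^*(G/T;\Z)$ is concentrated in even degrees and is $\Z$-free with basis $\{[X_w]\}_{w\in W}$, with $[X_w]\in H^{2l(w)}(G/T;\Z)$.

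Next I would feed this into the Serre spectral sequence of the Borel fibration $G/T \hookrightarrow ET\times_T G/T \xrightarrow{\pi} BT$. Because $H^*(BT;\Z)$ and $H^*(G/T;\Z)$ are both concentrated in even degrees, every nonzero term $E_2^{p,q}=H^p(BT;\Z)\otimes H^q(G/T;\Z)$ has $p$ and $q$ both even; since $d_r$ shifts bidegree by $(r,1-r)$, at least one coordinate of the target is odd for every $r\geq 2$, so all differentials vanish and the sequence collapses at $E_2$. By Leray--Hirsch, any $H^*(BT;\Z)$-module lifts of the non-equivariant Schubert basis freely generate $H^*_T(G/T;\Z)$ over $H^*(BT;\Z)$.

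What is left is to identify the equivariant Schubert classes $X_w$ as such lifts, which is the step I expect to be the main obstacle. Using either definition mentioned in the remark --- the equivariant Gysin pushforward from a Bott--Samelson resolution, or the Borel--Moore fundamental class on a finite-dimensional approximation $ET_N\times_T G/T$ --- one has to verify by naturality that the restriction of the equivariant class $X_w$ along a fibre inclusion $G/T\hookrightarrow ET\times_T G/T$ coincides with the ordinary fundamental class $[X_w]$. Granted this compatibility, the freeness and the basis statement are immediate from the collapsed spectral sequence, and the degree assertion $|X_w|=2l(w)$ is automatic from the complex codimension of $X_w$.
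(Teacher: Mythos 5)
The paper does not prove this statement at all: it is quoted as Chevalley's theorem with the citation \cite{Che}, so there is no in-paper argument to compare yours against. Your proposal is the standard proof found in the literature and is essentially sound: the opposite Bruhat cells give a $T$-stable affine paving, so $H^*(G/T;\Z)$ is even and $\Z$-free on the ordinary classes $[X_w]$ in degree $2l(w)$; evenness of both base and fibre forces the Serre spectral sequence of $G/T\hookrightarrow ET\times_T G/T\to BT$ to collapse at $E_2$ (the local system is trivial since $BT$ is simply connected, a point worth saying), and since $E_\infty$ is then a free $H^*(BT;\Z)$-module the extension problems are harmless, or one simply invokes Leray--Hirsch once classes restricting to a fibre basis are exhibited.

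The step you flag as the ``main obstacle'' --- that the equivariant class $X_w$ restricts along a fibre inclusion to the ordinary class $[X_w]$ --- is in fact a routine naturality check rather than a genuine difficulty: with the Borel--Moore definition on an approximation $ET_N\times_T G/T$, the subvariety defining the equivariant class is $ET_N\times_T X_w$, whose intersection with a fibre is exactly $X_w$, and the fibre inclusion is suitably transverse, so the restriction of its class is $[X_w]$ essentially by construction; the same follows for the Bott--Samelson definition from base change for the equivariant Gysin map. So your argument is complete up to spelling out that compatibility, and it is the argument one would expect behind the cited theorem; it does not conflict with anything in the paper.
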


\subsection{Borel presentation}
With the rational coefficients, $H^*_T(G/T;\Q)$ has a simple presentation
as the {\em double coinvariant ring} of $W$: 
Let $\Q[z_1,z_2,\ldots,z_n]^W$ be the invariant ring of $W$. Then, it is well-known
\begin{thm}[see, for example, \cite{K2}]
 $H^*_T(G/T;\Q)$ is isomorphic to
 $H^*(BT;\Q) \otimes_{H^*(BT;\Q)^W} H^*(BT;\Q)$ 
 as algebras over $H^*(BT;\Q)$.
 Furthermore, since $H^*(BT;\Q)$ is a polynomial algebra, 
 we have
 \[
 H^*_T(G/T;\Q) \simeq \dfrac{\Q[t_1,t_2,\ldots,t_n,x_1,x_2,\ldots,x_n]}{\left(g(t_1,\ldots,t_n)-g(x_1,\ldots,x_n) 
  \mid g\in \Q[z_1,z_2,\ldots,z_n]^W \right)},
 \] 
 where $n=\rank(G)$ and the $H^*(BT;\Q)$-algebra structure on the right hand side 
 is given by the multiplication of $\Q[t_1,t_2,\ldots,t_n]$.
 We call this quotient ring the double invariant ring of $W$ and 
 denote it by $R_W$.
\end{thm}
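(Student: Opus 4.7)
The plan is to realise the Borel construction as a homotopy fibre product and compute its cohomology via a K\"unneth-type theorem.

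First, I would choose the model $ET = EG$, which is permissible since $T \subset G$ acts freely on $EG$, and establish a homotopy equivalence
\[
ET \times_T G/T \;\simeq\; BT \times_{BG} BT,
\]
where both arrows $BT \to BG$ are induced by the inclusion $T \hookrightarrow G$. A point of the right-hand side is represented by a pair $(e,g) \in EG \times G$ under the equivalence $(e,g) \sim (e t_1, t_1^{-1} g t_2)$ for $t_1, t_2 \in T$; unwinding this identifies the fibre product with $EG \times_T G/T$. The two projections of the fibre square then produce two natural $H^*(BT;\Q)$-algebra maps
\[
H^*(BT;\Q) \rightrightarrows \H(G/T;\Q),
\]
which agree after further pullback to $H^*(BG;\Q)$.

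Next, I would invoke the classical Borel isomorphism $H^*(BG;\Q) \cong H^*(BT;\Q)^W \cong \Q[z_1,\ldots,z_n]^W$ together with the Chevalley--Shephard--Todd theorem, which says that $\Q[z_1,\ldots,z_n]$ is a free module of rank $|W|$ over its invariant subring. This freeness allows the Eilenberg--Moore spectral sequence for the fibre square (equivalently, Leray--Hirsch for the $G/T$-bundle $BT \to BG$ followed by base change) to collapse: all higher $\mathrm{Tor}$ terms vanish and one obtains the $H^*(BT;\Q)$-algebra isomorphism
\[
\H(G/T;\Q) \;\cong\; H^*(BT;\Q) \otimes_{H^*(BT;\Q)^W} H^*(BT;\Q),
\]
which is the first claim.

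The second claim then follows by unwinding the tensor product. I would identify the two factors with $\Q[t_1,\ldots,t_n]$ and $\Q[x_1,\ldots,x_n]$ respectively, and embed $\Q[z_1,\ldots,z_n]^W$ into each via $z_i \mapsto t_i$ and $z_i \mapsto x_i$. The tensor product over the invariants is then presented as $\Q[t_1,\ldots,t_n,x_1,\ldots,x_n]$ modulo the ideal generated by the differences $g(t_1,\ldots,t_n) - g(x_1,\ldots,x_n)$ with $g \in \Q[z_1,\ldots,z_n]^W$, and the $H^*(BT;\Q)$-algebra structure is inherited from the $t_i$'s. The main obstacle is the degeneration of the Eilenberg--Moore spectral sequence, which in turn reduces to the freeness of $H^*(BT;\Q)$ over $H^*(BG;\Q)$ supplied by Chevalley--Shephard--Todd; once this is in place the identification with the tensor product is forced.
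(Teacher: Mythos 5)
Your proposal is correct and is essentially the standard argument behind this theorem, which the paper itself does not reprove but delegates to \cite{K2}: identify $ET\times_T G/T$ with the (homotopy) fibre square $BT\times_{BG}BT$, use Borel's isomorphism $H^*(BG;\Q)\cong H^*(BT;\Q)^W$ and the freeness of $\Q[z_1,\ldots,z_n]$ over its $W$-invariants to collapse the Eilenberg--Moore spectral sequence (equivalently, apply Leray--Hirsch and base change), and then rewrite the tensor product over the invariants as the quotient of $\Q[t_1,\ldots,t_n,x_1,\ldots,x_n]$ by the differences $g(t)-g(x)$. The only points worth making explicit are that $BG$ is simply connected (needed for Eilenberg--Moore convergence) and that the left tensor factor corresponds to the projection $\pi$, so the $H^*(BT;\Q)$-algebra structure is indeed the one by multiplication with the $t$-variables.
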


 An element of $R_W$ or its representative in the polynomial ring
 $\Q[t_1,t_2,\ldots,t_n,x_1,x_2,\ldots,x_n]$ is called a double polynomial
 and denoted by $f(t;x)$.
 To distinguish the two $H^*(BT;\Q)$ factors,
 we often denote by $g(t)$ an element in the left factor $H^*(BT;\Q)$,
 and by $g(x)$ an element in the right $H^*(BT;\Q)$ factor.

\begin{rem}
The above theorem holds with $\Z$-coefficients when $G=\SU(n+1)$ and $\Sp(n)$
but does not in general (see, for example, \cite{Fuk,FIM,Sato1,Sato2}).
\end{rem}

\subsection{GKM presentation}
Konstant and Kumar \cite{KK} essentially gives a combinatorial description
of $H^*_T(G/T;\Q)$, which is now referred to as the GKM presentation.
Arabia \cite{Ara} states the result clearly and gives an independent proof.
Goresky-Kottwitz-MacPherson \cite{GKM} discusses the torus equivariant cohomology
of a manifold with certain properties extending the case of flag manifolds.
Historically, \cite{KK} comes earlier, however, it is common to call the presentation with 
the initials of Goresky-Kottwitz-MacPherson.

The fixed points of the $T$-action on $G/T$ are $\{ vT/T \mid v\in W \}$.
Let $i_v: vT/T \hookrightarrow G/T$ be the inclusion of the fixed point.
Note that $i_v$ is $T$-equivariant.
Putting them together, we obtain the {\em localization map}
\[
 \bigoplus_{v\in W}i^*_v: H^*_T(G/T;\Q) \to \bigoplus_{v\in W} H^*(BT;\Q),
\]
where we identify $H^*_T(vT/T;\Q)\simeq H^*(BT;\Q)$.
Now, define a graph $\G$ as follows:
The vertices are the elements of $W$
and there is an edge between $u,v\in W$ if and only if $s_\beta v = u$ for some reflection $s_\beta$
with regard to a positive root $\beta$.
Label the edge with the ideal $(\beta)$ of $H^*(BT;\Q)$ generated by $\beta$.
The graph $\G$ is called the GKM-graph and $H^*_T(G/T;\Q)$ is given
a purely combinatorial description:
\begin{thm}[\cite{Ara,GHZ,KK}]
Define 
\[
H^*(\G;\Q) := \left\{ \bigoplus_{w\in W} h_v \in \bigoplus_{v\in W}H^*(BT;\Q) \mid h_v-h_u\in (\beta) \text{ if } s_\beta v = u \right\}
\subset \bigoplus_{v\in W}H^*(BT;\Q).
\]
Regard $H^*(\G;\Q)$ as a $H^*(BT;\Q)$-algebra
by factor-wise multiplication.
Then, $H^*_T(G/T;\Q) \simeq H^*(\G;\Q)$ as $H^*(BT;\Q)$-algebras.
This isomorphism is given by the localization map.
\end{thm}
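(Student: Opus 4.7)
The plan is to verify the three standard ingredients of any GKM-type argument: injectivity of the localization map, the single-edge divisibility condition, and surjectivity onto $H^*(\G;\Q)$.

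First, I would prove that $\bigoplus_{v\in W} i^*_v$ is injective. The $T$-fixed locus of $G/T$ is the finite discrete set $\{vT/T \mid v\in W\}$, so the Atiyah--Bott localization theorem implies the kernel of restriction to the fixed points is $H^*(BT;\Q)$-torsion. By the Chevalley theorem recalled above, $H^*_T(G/T;\Q)$ is a free $H^*(BT;\Q)$-module of rank $|W|$, hence torsion-free, and therefore the localization map is injective.

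Next I would establish the GKM congruences $i^*_v(\alpha)-i^*_u(\alpha)\in(\beta)$ whenever $u=s_\beta v$. For each positive root $\beta$, the rank-one semisimple subgroup of $G_\C$ generated by the $\pm\beta$ root subgroups is a copy of $SL(2,\C)$, and its orbit through $vT/T$ is a $T$-stable $\mathbb{P}^1_\beta \subset G/T$ whose only $T$-fixed points are $vT/T$ and $s_\beta v T/T = uT/T$. The torus $T$ acts on $\mathbb{P}^1_\beta$ through the character $\beta$, and a direct computation gives $H^*_T(\mathbb{P}^1_\beta;\Q)\cong \{(a,b)\in H^*(BT;\Q)^2 \mid a-b\in (\beta)\}$. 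Pulling $\alpha$ back to $\mathbb{P}^1_\beta$ and then restricting to its two fixed points yields the required divisibility.

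For surjectivity, I would compare $H^*(BT;\Q)$-bases. A direct induction along the Bruhat order, imposing the GKM conditions one edge at a time, produces a triangular $H^*(BT;\Q)$-basis $\{\xi_w\}_{w\in W}$ of $H^*(\G;\Q)$ satisfying $\xi_w(v)=0$ unless $w\le v$ and with $\xi_w(w)$ equal to the product of the positive roots sent negative by $w$. On the other hand, the equivariant Schubert classes localize with the same triangular pattern (this is verified either by pulling back along the Bott--Samelson resolution of $X_w$, or through the Kostant--Kumar nil-Hecke computation of $i^*_v(X_w)$). The change-of-basis matrix between $\{i^*_\bullet(X_w)\}$ and $\{\xi_w\}$ is therefore upper-triangular with units on the diagonal, so the injective image already spans all of $H^*(\G;\Q)$.

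The main obstacle is the combined step underlying surjectivity: one must both construct the triangular GKM-basis $\{\xi_w\}$ on the combinatorial side and independently show that the geometrically defined Schubert classes localize with matching triangular pattern. The former is a delicate combinatorial induction on the GKM graph; the latter requires a genuine geometric input (Bott--Samelson pushforward or the divided-difference recursion). By contrast, the injectivity reduces cleanly to Chevalley's freeness theorem, and the single-edge congruence reduces cleanly to the $T$-equivariant cohomology of $\mathbb{P}^1$.
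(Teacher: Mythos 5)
The paper does not prove this theorem at all: it is quoted as a known result with references to Kostant--Kumar, Arabia, and Guillemin--Holm--Zara, so there is no internal proof to compare against. Your outline is, in substance, the standard argument found in those sources, and it is correct: injectivity from the Borel--Atiyah--Bott localization theorem plus Chevalley's freeness theorem (freeness kills the torsion kernel); the edge congruences from restricting to the $T$-stable curve $\mathbb{P}^1_\beta$ joining $vT/T$ and $s_\beta vT/T$, on which $T$ acts through $\pm\beta$ (the sign and the distinction between $\beta$ and $v(\beta)$ are convention issues that do not affect the ideal over $\Q$); and surjectivity by comparing the localized Schubert classes with a triangular combinatorial basis. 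Two remarks on the last step. First, your claim that one must \emph{both} build the combinatorial basis $\{\xi_w\}$ and compute $i^*_v(X_w)$ is slightly more than is needed: given Billey/Kostant--Kumar triangularity of the $i^*(X_w)$, surjectivity follows by a downward induction on the Bruhat-minimal element $v$ of the support of a class $h\in H^*(\G;\Q)$, using that the labels $\beta$ of the edges descending from $v$ are pairwise non-proportional linear forms, hence coprime in $H^*(BT;\Q)$, so their product divides $h_v$ and one may subtract a multiple of $i^*(X_v)$; this avoids constructing $\{\xi_w\}$ independently. Second, the route actually taken in \cite{GKM} is different in flavour: it deduces surjectivity from equivariant formality via the Chang--Skjelbred exact sequence (restriction to the one-skeleton), which is more general since it needs no Schubert basis, whereas your basis-comparison argument is closer to \cite{KK,GHZ,Billey} and has the advantage of producing the explicit module basis that the rest of the paper uses. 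Either way, your proposal is a correct reconstruction of the cited proof, modulo the small normalization point that $i^*_w(X_w)$ is the product of the positive roots in $\Phi^+\cap w\Phi^-$ (with the paper's $B^-$-convention), which should be matched with the normalization chosen for $\xi_w$.
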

Note that in this presentation, a single element in the cohomology 
is represented by a series of polynomials indexed by the Weyl group.
We denote an element by $h\in H^*(\G;\Q)$ and its factor corresponding to $v\in W$ by $h_v$.

\begin{rem}
When any two positive roots are relatively prime, that is, there is no integer greater than one which divides 
more than one positive roots, the above theorem still holds if $\Q$ is replaced with $\Z$
(\cite{HHH}).
However, this is not true, for example, for type $C$.
The positive roots of $C_2$ are taken to be $2t_1, 2t_2, t_2\pm t_1$.
Let $X_{w_0}$ be the Schubert class corresponding to the longest element $w_0 \in W$.
Then, 
\[
i_v(X_{w_0}) = \begin{cases} 2t_1 \cdot 2t_2 (t_2+t_1) ( t_2-t_1) & (v = w_0) \\ 0 & (v\neq w_0). \end{cases}
\]
While $\frac12 X_{w_0} \in H^*(\G;\Z)$, this class is not in $H^*_T(G/T;\Z)$.

See \cite[Theorem 11.3.9]{Ku} for an analogous result to the above theorem with the $\Z$-coefficients.
\end{rem}

\subsection{Remark on incomplete flag manifolds}
For an incomplete flag manifold $G/P$, where $P$ is a parabolic subgroup,
the Chevalley and the GKM presentations are valid if the index set $W$ is replaced by the set of cosets $W/W_P$,
where $W_P$ is the Weyl group of $P$.
The Borel presentation is given by
\[
 H^*(BT;\Q) \otimes_{H^*(BT;\Q)^W} H^*(BP;\Q),
\]
where $H^*(BP;\Q)$ is isomorphic to the invariant ring $H^*(BT;\Q)^{W_P}$ of $W_P$.
All the arguments in the rest of this paper work for $H^*_T(G/P)$ with minor modification.

\section{Conversion algorithm}\label{algorithm}
From now on, we work with the rational coefficients.
This is not restrictive since 
$H^*_T(G/T;\Q) \simeq H^*(BT;\Q)\otimes_{H^*(BT;\Z)} H^*_T(G/T;\Z)$ and 
$H^*_T(G/T;\Z) \subset H^*_T(G/T;\Q)$ is a subalgebra.

We describe how to convert an element in one presentation 
to another. Examples are given in \S \ref{demo}.

\subsection{Divided difference operator}\label{divided-difference}
In \cite{BGG,Dem}, a series of operators called the {\em divided difference operators}
on $H^*(G/T;\Q)$ were introduced.
Here, we use its straightforward extension to the equivariant setting,
of which we will make heavy use later.
\begin{dfn}
Let $\alpha_i$ be a simple root and $s_i$ be the corresponding simple reflection.
Let $P_i$ be the centraliser of $\ker(\alpha_i)$, where $\alpha_i$ is regarded as a homomorphism $T\to S^1$. 
We have the following oriented, $T$-equivariant fibre bundle
\[
 P_i/T \hookrightarrow G/T \xrightarrow{\pi} G/P_i,
\]
where $P_i/T \simeq S^2$.
Define 
\[
\Delta_i := \pi^* \circ \pi_*,
\]
where $\pi_*:H^*_T(G/T;\Q) \to H^{*-2}_T(G/P_i;\Q)$
be the Gysin map.

For $w\in W$, define
\[
\Delta_w := \Delta_{i_1}\circ \Delta_{i_2}\circ \cdots \circ \Delta_{i_{l(w)}},
\]
where $w=s_{i_1}s_{i_2}\cdots s_{i_{l(w)}}$ is a reduced word.
\end{dfn}

\begin{thm}[see, for example, \cite{BGG,K2,Ku}]\label{divided-diff}
\begin{enumerate}
\item $\Delta_w$ does not depend on the choice of a reduced word of $w\in W$, and thus, is well-defined.
\item For a Schubert class $X_w$, $\Delta_i$ is computed by
\[
 \Delta_i (X_w ) = \begin{cases} X_{w s_i} & ( l(ws_i)=l(w)-1 ) \\ 0 & (l(ws_i)=l(w)+1) \end{cases}.
\]
\item For an element $f(t;x)\in R_W$ in the Borel presentation, $\Delta_i$ is computed by
\[
 \Delta_i(f(t;x)) = \dfrac{f(t;x)-s_i(f(t;x))}{-\alpha_i(x)},
\]
where the simple reflection $s_i$ acts on the $x$-variables (not on the $t$-variables) in $f(t;x)$.
\item For an element $h\in H^*(\G;\Q)$ in the GKM presentation, $\Delta_i$ is computed by
\[
\left(\Delta_i(h) \right)_v = \dfrac{h_v-h_{vs_i}}{-v(\alpha_i)}
\]
\end{enumerate}
\end{thm}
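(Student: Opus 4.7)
The plan is to establish (3) as the central algebraic identity, and then derive (1) combinatorially, (2) geometrically, and (4) by localization.

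For (3), I would exploit the fact that $\pi\colon G/T \to G/P_i$ is an oriented, $T$-equivariant $\mathbb{CP}^1$-bundle whose fiber $P_i/T$ has two $T$-fixed points with opposite tangent weights $\pm\alpha_i$. In the Borel presentation, $\pi^*$ identifies $H^*_T(G/P_i;\Q)$ with the subring of $R_W$ invariant under the action of $s_i$ on the $x$-variables, and is simply the inclusion. The Gysin map $\pi_*$ is integration along the fiber, which by Atiyah--Bott--Berline--Vergne applied fiberwise computes to $(f - s_if)/(-\alpha_i(x))$, an expression that is manifestly $s_i$-invariant (hence lies in the image of $\pi^*$); composing with $\pi^*$ then gives the formula in (3). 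Once (3) is in hand, part (1) follows from the classical combinatorial fact (Demazure, Bernstein--Gelfand--Gelfand) that these polynomial divided difference operators satisfy the braid relations of $W$; combined with the faithfulness of the Borel presentation over $\Q$, this shows that the composition $\Delta_{i_1}\circ\cdots\circ\Delta_{i_{l(w)}}$ depends only on $w$.

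For (2), I would argue geometrically. Let $w'$ be the minimal-length representative of the coset $\{w, ws_i\} \in W/\langle s_i\rangle$. If $l(ws_i) = l(w)+1$, then $w=w'$ and $\pi|_{X_w}\colon X_w \to X_{w'}$ has generically one-dimensional fibers, so $\pi_*X_w = 0$ and hence $\Delta_iX_w = 0$. If $l(ws_i) = l(w)-1$, then $w'=ws_i$ and $\pi|_{X_w}\colon X_w \to X_{ws_i}$ is a morphism between varieties of the same complex dimension; a check on a generic $\mathbb{CP}^1$-fiber of $\pi$ shows that $X_w$ meets that fiber in a single point, so $\pi|_{X_w}$ has degree one and $\pi_*X_w = X_{ws_i}$ in $H^*_T(G/P_i)$. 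Since $ws_i$ is the minimal-length representative, $\pi^*X_{ws_i}=X_{ws_i}$ in $H^*_T(G/T)$, yielding $\Delta_iX_w = X_{ws_i}$.

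Part (4) follows by localization. The localization map $\bigoplus_v i_v^*$ is injective, so it suffices to verify (4) after localizing. Under the Borel-to-GKM dictionary, $i_v^*(x) = v^{-1}(t)$, so $h_v = f(t; v^{-1}t)$; and $(s_if)(t;x) = f(t; s_ix)$ by definition. Applying $i_v^*$ to the formula in (3) yields
\[
i_v^*(\Delta_i f) = \frac{f(t; v^{-1}t) - f(t; s_iv^{-1}t)}{-\alpha_i(v^{-1}t)} = \frac{h_v - h_{vs_i}}{-v(\alpha_i)},
\]
using $s_iv^{-1}=(vs_i)^{-1}$ and $\alpha_i\circ v^{-1} = v(\alpha_i)$ as linear functionals on $\mathfrak{t}$. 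The main obstacle I anticipate is in (3): carefully tracking the orientation of the fiber $\mathbb{CP}^1$ and the signs of the tangent weights at its two $T$-fixed points, so as to arrive at the distinctive $-\alpha_i(x)$ in the denominator rather than $+\alpha_i(x)$. Once that sign is pinned down, the remaining parts follow with relative ease.
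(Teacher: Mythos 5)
The paper does not actually prove Theorem \ref{divided-diff}; it is quoted as a known result with references to \cite{BGG,K2,Ku}, so there is no in-paper argument to compare against line by line. Your sketch follows the standard route of those references and is essentially sound: (2) via the projection $\pi\colon G/T\to G/P_i$ restricted to opposite Schubert varieties, (3) as a fiber-integration computation in the Borel presentation, (1) from the braid relations for the algebraic divided difference operators together with injectivity over $\Q$, and (4) by restricting to fixed points, with the sign pinned down by the tangent weights $\pm\alpha_i$ of the fiber. Your identification of $\pi^*H^*_T(G/P_i;\Q)$ with the $s_i$-invariants (in the $x$-variables) of $R_W$, and your bookkeeping $i_v^*(s_if)=h_{vs_i}$, $i_v^*(\alpha_i(x))=v(\alpha_i)$, are consistent with the paper's conventions (Proposition \ref{Borel2GKM}), so the final formulas match the statement.

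The one step that does not work as literally stated is ``Atiyah--Bott--Berline--Vergne applied fiberwise'' in your proof of (3): a generic fiber of $\pi$ lies over a non-fixed point of $G/P_i$ and carries no $T$-fixed points, so there is nothing to localize at fiber by fiber. The two standard repairs are: (i) use that $H^*_T(G/T;\Q)$ is a free $H^*_T(G/P_i;\Q)$-module on a basis $\{1,\xi\}$ with $\xi$ of degree $2$, compute $\pi_*(1)=0$ and $\pi_*(\xi)=\pm 1$, and deduce the formula from the projection formula and $s_i$-invariance; or (ii) verify the formula only over the fixed points $uP_i$, where the fiber is a $T$-invariant $\mathbb{CP}^1$ with fixed points $uT$, $us_iT$ and tangent weights $\pm u(\alpha_i)$ --- but this is precisely a direct proof of (4), after which (3) follows from injectivity of the localization map and Proposition \ref{Borel2GKM}, reversing the order (4) $\Rightarrow$ (3) of your write-up. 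Neither repair is difficult, but as written the logical support for (3), and hence for (1), rests on this heuristic. Finally, in (2) the ``check on a generic fiber'' should be replaced by the standard fact that when $l(ws_i)=l(w)-1$ the cell $B^-wB/B$ maps bijectively onto its image in $G/P_i$, which gives the degree-one claim cleanly.
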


\subsection{Weyl group action}
Another important ingredients for our algorithm is the Weyl group action on 
$H^*_T(G/T;\Q)$ defined in \S \ref{cohomology}.
We will describe it in all the three presentations.
\begin{prop}
\begin{enumerate}
\item For Schubert classes, 
\[
 s_i X_w  = \begin{cases} 
 X_w - w(\alpha_i)(t) X_{ws_i} - \sum_{\beta} \dfrac{2(\alpha_i,\beta)}{(\beta,\beta)}X_{w s_i s_\beta} & (l(ws_i)=l(w)-1) \\
 X_w & (l(ws_i)=l(w)+1)
 \end{cases},
\]
where $\beta$ runs all those positive roots that $l(w)=l(ws_i s_\beta)$.
\item In the Borel presentation, $W$ acts on the right $H^*(BT;\Q)$ factor in $H^*(BT;\Q)\otimes_{H^*(BT;\Q)^W} H^*(BT;\Q)$
by the standard representation. That is, it acts on $x$-variables.
\item For an element $h\in H^*(\G;\Q)$ in the GKM presentation,
\[
 (w\circ h)_v = h_{vw}.
\]
\end{enumerate}
\end{prop}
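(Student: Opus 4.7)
The plan is to verify the three formulas separately, treating each presentation with its own natural tools. Parts (2) and (3) follow quickly from the geometric origin of the $W$-action on $G/T$, while part (1) requires combining (2) with the divided-difference calculus of Theorem \ref{divided-diff}.

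For part (2), the key observation is that the right $W$-action $R_w : gT \mapsto gwT$ on $G/T$ commutes with the left $T$-action (since $(tg)w = t(gw)$), so it extends to an action on the Borel construction $ET \times_T G/T$ that covers the identity on $BT$. Consequently the $\pi^*$-image, which gives the left $H^*(BT;\Q)$-factor containing the $t$-variables, is pointwise $W$-fixed. For the right factor, one invokes the Leray-Hirsch identification $H^*(G/T;\Q) \simeq \Q[x_1,\ldots,x_n]/(\Q[z_1,\ldots,z_n]^W_+)$, under which each $x_i$ is the first Chern class of the line bundle $L_{\omega_i}$ associated to a fundamental weight. Since $W$ acts on $L_\chi$ through its standard action on characters $\chi \in \hat T$, the induced action on the $x$-variables is the standard representation. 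Part (3) is simply the naturality of pullback: $R_w$ sends the fixed point $vT/T$ to $vwT/T$, so $R_w \circ i_v = i_{vw}$, whence $(w \circ h)_v = i_v^*(R_w^* h) = i_{vw}^*(h) = h_{vw}$.

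For part (1), I would combine (2) with Theorem \ref{divided-diff}(3) and rearrange that formula to obtain the operator identity $s_i = \mathrm{id} + \alpha_i(x)\Delta_i$ on the Borel presentation. Applying this to a Schubert class and invoking Theorem \ref{divided-diff}(2) immediately gives $s_i X_w = X_w$ when $l(ws_i) > l(w)$ and $s_i X_w = X_w + \alpha_i(x) X_{ws_i}$ when $l(ws_i) < l(w)$. The remaining task is to expand $\alpha_i(x) X_{ws_i}$ in the Schubert basis, which is an instance of the equivariant Chevalley formula. I would verify the claimed expansion by comparing GKM restrictions of both sides, using Theorem \ref{divided-diff}(4) and part (3) together with the vanishing $(X_u)|_v = 0$ unless $v \ge u$ in the Bruhat order. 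The diagonal coefficient $-w(\alpha_i)(t)$ of $X_{ws_i}$ emerges from the identity $(ws_i)(\alpha_i) = -w(\alpha_i)$, and the constant off-diagonal coefficients $-2(\alpha_i,\beta)/(\beta,\beta) = -\langle \alpha_i,\beta^\vee \rangle$ come from the non-equivariant Chevalley rule. The main obstacle is precisely this last step, namely the bookkeeping of signs and the matching of the length condition $l(ws_is_\beta) = l(w)$ with the positive-root ascents $l((ws_i)s_\beta) = l(ws_i)+1$, rather than any conceptually deep calculation.
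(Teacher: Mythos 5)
Your proposal is correct and follows essentially the same route as the paper: the key step for part (1) is the operator identity $s_i = \mathrm{id} + \alpha_i(x)\Delta_i$ coming from Theorem \ref{divided-diff}, followed by expanding $\alpha_i(x)X_{ws_i}$ via the equivariant Chevalley formula (with the sign coming from $(ws_i)(\alpha_i)=-w(\alpha_i)$), while parts (2) and (3) are the easy geometric observations the paper omits. The only difference is that the paper simply cites the Chevalley formula from \cite{K} where you propose to re-verify it by comparing GKM restrictions, which is unnecessary but harmless.
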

\begin{proof}
We only show the action on the Schubert classes since the other two are easy to see.
By Theorem \ref{divided-diff} (1) and (2),
we have $s_i X_w = X_w + \alpha_i(x) X_{ws_i}$.
By the Chevalley formula (see, for example, \cite{K}),
we have
\[
\alpha_i(x) X_{v} = v(\alpha_i)(t) X_v - \sum_{\beta} \dfrac{2(\alpha_i,\beta)}{(\beta,\beta)}X_{v s_\beta},
\]
where $\beta$ runs all those positive roots that $l(w s_\beta)=l(w)+1$.
Combining these two, we obtain the formula for $s_i X_w$.
\end{proof}

\subsection{Borel to GKM presentation}
Now, we are ready to give a conversion method among elements in the three presentations.
We begin with an algorithm to convert from the Borel presentation to the GKM presentation.
\begin{prob}
 Given a double polynomial $f(t;x) \in R_w$ in the Borel presentation, 
 find a series of polynomials $\{ h_v \mid v\in W \} \in H^*(\G;\Q)$
 in the GKM presentation which represents the same class as $f(t;x)$.
\end{prob}
Let $ev: \Q[t_1,t_2,\ldots,t_n,x_1,x_2,\ldots,x_n] \to \Q[t_1,t_2,\ldots,t_n]$
be the evaluation at $x_i=t_i \ (1\le i\le n)$.
We often denote $ev(f(t;x))$ by $f(t;t)$.
It is easy to see that 
$i^*_e(f(t;x))=ev(f(t;x))$ and $v \circ i_e= i_v$.
Therefore, we have
\begin{prop}\label{Borel2GKM}
The series of polynomials
 $\{ ev \circ v (f(t;x)) \mid v\in W \}\in H^*(\G;\Q)$
 represents the same class as $f(t;x)\in R_W$.
\end{prop}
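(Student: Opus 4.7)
The plan is to combine the GKM identification $H^*_T(G/T;\Q) \simeq H^*(\G;\Q)$, which is implemented by the localisation map $\bigoplus_v i_v^*$, with the two elementary facts stated just before the proposition: $i_e^*(f(t;x)) = ev(f(t;x))$ and $v\circ i_e = i_v$. Because the localisation map is an isomorphism, a tuple $(h_v)_{v\in W}\in H^*(\G;\Q)$ represents $f(t;x)\in R_W$ if and only if $h_v = i_v^*(f(t;x))$ for every $v\in W$. The whole task therefore reduces to checking the single pointwise identity $i_v^*(f(t;x)) = ev(v(f(t;x)))$.

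To establish it I would take equivariant pull-backs in the factorisation $i_v = v\circ i_e$; contravariance of pull-back gives $i_v^* = i_e^*\circ v^*$, where $v^*$ denotes the map induced on $H^*_T(G/T;\Q)$ by the self-map $gT\mapsto gvT$ of $G/T$. By the description of the Weyl group action recalled in the preceding subsection, $v^*$ coincides with the left $W$-action on $H^*_T(G/T;\Q)$, which on the Borel presentation acts as the standard representation of $W$ on the $x$-variables; that is, $v^*(f(t;x)) = v(f(t;x))$. Combining with $i_e^* = ev$ then yields
\[
i_v^*(f(t;x)) \;=\; i_e^*\bigl(v(f(t;x))\bigr) \;=\; ev\bigl(v(f(t;x))\bigr),
\]
which is the required identity.

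The argument is therefore purely formal once the two preliminary facts are in hand, so these are where I expect the only real content to sit. The identity $v\circ i_e = i_v$ is immediate from $v\cdot eT = vT$. The identity $i_e^* = ev$ is the standard computation that, upon realising the $x$-variables as equivariant first Chern classes of the tautological line bundles $G\times_T \C_{\chi_i}$ over $G/T$, the restriction to the fixed point $eT$ sends $x_i$ to the corresponding character $t_i \in H^*(BT;\Q)$. This last check, while standard, is the only step that goes beyond a diagram chase, and is the point where a potential sign/duality convention has to be pinned down before the rest of the argument assembles automatically.
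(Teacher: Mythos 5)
Your argument is correct and is essentially the paper's own proof: the paper likewise reduces the statement to the two facts $i_e^*(f(t;x))=ev(f(t;x))$ and $v\circ i_e=i_v$, stated immediately before the proposition, and concludes via the localisation isomorphism. You merely spell out the intermediate step $i_v^*=i_e^*\circ v^*$ and the identification of $v^*$ with the $W$-action on the $x$-variables, which the paper leaves implicit.
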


\subsection{Schubert to Borel presentation}
\begin{prob}
 Find a double polynomial $\sch_w \in H^*(BT;\Q)\otimes H^*(BT;\Q)$
 which represents the Schubert class $X_w$.
\end{prob}
The polynomial representative $\sch_w$ is often referred to 
as the {\em double (equivariant) Schubert polynomial}
(see, for example, \cite{ful2,FP,Kre-Tam,LS}).
A uniform formula for any Lie type is given by the author:
\begin{thm}[\cite{K}]\label{Schubert2Borel}
For $w\in W$, define the set of 
{\em $k$-factor decompositions} 
$P_k(w)$ recursively for $1\le k\le l(w)$ by
\begin{align*}
 P_k(e) &= \emptyset \\
 P_k(w) &= \bigcup_{s_i} \biggl(
 \{ (w_1,w_2,\ldots,w_{k-1}, w_k s_i) \mid (w_1,\ldots,w_k) \in P_k(ws_i) \}  \\
 &  {} \hspace{1cm} \cup 
   \{ (w_1,w_2,\ldots,w_{k-1}, s_i) \mid (w_1,\ldots,w_{k-1}) \in P_{k-1}(ws_i) \} 
 \biggr), 
\end{align*}
where the union runs over all simple reflections $s_i$ such that $l(ws_i)=l(w)-1$.

Then, $\sch_e(t;x)=1$ and
\[
\sch_w(t;x)=
\sum_{i=1}^{l(w)}
\sum_{(w_1, w_2, \ldots, w_i) \in P_i(w)} (-1)^{i}
\sigma_{w_1}(t)\sigma_{w_2}(t)\cdots \sigma_{w_{i-1}}(t) (\sigma_{w_i}(t) -\sigma_{w_i}(x)),
\]
where $\sigma_w \in H^*(BT;\Q)$ is any polynomial representative of the ordinary Schubert class in
the ordinary cohomology.
For example, by \cite{BGG} $\sigma_w$ is given recursively by
\[
 \sigma_{w_0} = \dfrac{1}{|W|} \prod_{\beta \in \Phi^+} (-\beta), \quad
 \sigma_w = \Delta_{w^{-1}w_0}\sigma_{w_0}, 
\]
where $\Phi^+$ is the set of the positive roots.
\end{thm}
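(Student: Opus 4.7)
The plan is induction on $l(w)$. The base case $l(w)=0$ is immediate: $\sch_e(t;x)=1$ represents $X_e\in R_W$.

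For the inductive step, I would fix $w$ with $l(w)\ge 1$ and a simple reflection $s_i$ satisfying $l(ws_i)=l(w)-1$, and verify the divided-difference identity
\[
\Delta_i\,\sch_w(t;x)=\sch_{ws_i}(t;x).
\]
By Theorem \ref{divided-diff}(3), $\Delta_i$ acts as the BGG-type operator $f\mapsto(f-s_if)/(-\alpha_i(x))$ on the $x$-variables, so it kills $t$-only polynomials and satisfies the twisted Leibniz rule $\Delta_i(fg)=\Delta_i(f)\,g+s_i(f)\,\Delta_i(g)$. On ordinary Schubert polynomials in $x$, $\Delta_i\sigma_u(x)$ equals $\sigma_{us_i}(x)$ (up to the sign dictated by the convention of the paper) when $l(us_i)<l(u)$, and $0$ otherwise. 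Applying these facts summand-by-summand to $\sch_w$, the $t$-only prefix $\sigma_{w_1}(t)\cdots\sigma_{w_{k-1}}(t)$ passes through and $\Delta_i$ acts only on the $\sigma_{w_k}(x)$ sitting inside the last factor $\sigma_{w_k}(t)-\sigma_{w_k}(x)$. The resulting terms must then recombine into the form of $\sch_{ws_i}$. The two branches of the recursion for $P_k(w)$---reducing the last factor by $w_k\mapsto w_ks_i$, versus appending $s_i$ as a new final factor---are engineered precisely so that two such contributions fuse into each summand $(\sigma_{\tilde w_k}(t)-\sigma_{\tilde w_k}(x))$ of $\sch_{ws_i}$. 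Checking this cancellation, with all signs and multiplicities lined up, is the main technical obstacle.

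Once the divided-difference recursion is in hand, the inductive hypothesis gives $\Delta_i\,\sch_w=\sch_{ws_i}=X_{ws_i}$, and Theorem \ref{divided-diff}(2) gives $\Delta_i X_w=X_{ws_i}$; hence $\sch_w-X_w\in\ker\Delta_i$ for every descent $s_i$ of $w$. To upgrade this to equality I would use diagonal vanishing: every summand of $\sch_w$ carries the factor $\sigma_{w_k}(t)-\sigma_{w_k}(x)$, so $\sch_w(t;t)=0$ when $w\ne e$, matching $i_e^*X_w=0$ for $w\ne e$. A family $\{F_w\}\subset R_W$ subject to $F_e=1$, the divided-difference recursion $\Delta_iF_w=F_{ws_i}$ whenever $l(ws_i)<l(w)$, and diagonal vanishing $F_w(t;t)=0$ for $w\ne e$ is uniquely determined---expand $F_w$ in the Schubert basis and extract coefficients inductively via iterated $\Delta_u$---and this forces $\sch_w=X_w$, completing the induction.
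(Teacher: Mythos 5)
The paper itself contains no proof of Theorem \ref{Schubert2Borel}; it is quoted from \cite{K}, so there is no internal argument to compare yours against, and your outline must stand on its own. It has two genuine gaps. The first is that the identity $\Delta_i\,\sch_w=\sch_{ws_i}$ for $l(ws_i)=l(w)-1$ is exactly where the content of the formula lies: showing that the two branches of the recursion defining $P_k(w)$ recombine, with the correct signs and multiplicities, into the summands of $\sch_{ws_i}$ is the substance of the theorem, and you explicitly defer it as ``the main technical obstacle''. As written, the central computation is asserted rather than proved.

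The second, more serious, gap is that your concluding uniqueness claim is false as stated. The conditions $F_e=1$, $\Delta_iF_w=F_{ws_i}$ for all $s_i$ with $l(ws_i)<l(w)$, and $F_w(t;t)=0$ for $w\neq e$ do \emph{not} determine $F_w$: in $\SU(3)$, for $w=s_1$, the class $X_{s_1}+c\,X_{s_2}$ satisfies all three for every constant $c$, since $\Delta_1X_{s_2}=0$ and $i_e^*X_{s_2}=0$. Your coefficient extraction $d_u=\bigl(\Delta_u F_w\bigr)(t;t)$ cannot be carried out from the stated data when no reduced word of $u$ ends in a right descent of $w$ (equivalently, when $u$ and $w$ have disjoint right descent sets), and precisely those coefficients are left free; your bottom-up induction only rules them out using conditions at \emph{higher} length, which it does not have. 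To close the gap you must either additionally prove the ascent vanishing $\Delta_i\sch_w=0$ when $l(ws_i)=l(w)+1$ (then $\sch_w-X_w$ is annihilated by every $\Delta_i$, hence is an $H^*(BT;\Q)$-multiple of $X_e$, which diagonal vanishing kills via Proposition \ref{Borel2Schubert}), or run the argument top-down from $w_0$, where every simple reflection is a descent: the descent recursion then gives $\sch_v=\Delta_{v^{-1}w_0}\sch_{w_0}$ for all $v$, the diagonal-vanishing conditions $\sch_v(t;t)=\delta_{v,e}$ identify all Schubert coefficients of $\sch_{w_0}$, forcing $\sch_{w_0}=X_{w_0}$, and equality propagates downward. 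Either repair requires work your proposal does not supply.
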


\subsection{Schubert to GKM presentation}
\begin{prob}
 Find a series of polynomials $\{ h_v \mid v\in W \} \in H^*(\G;\Q)$
 in the GKM presentation which represents the Schubert class $X_w$.
\end{prob}
The answer is given by the famous Billey's formula:
\begin{thm}[\cite{Billey}]\label{Schubert2GKM}
Let $v=s_{i_1}\cdots s_{i_{l(v)}}$ be a reduced word.
Define the set of subwords $Q_v(w)$ which multiply to $w$:
\[
 Q_v(w) := \{ (j_1, j_2, \ldots, j_{l(w)}) \mid s_{i_{j_1}}\cdots s_{i_{j_{l(w)}}}=w \}.
\]
The localization image of the Schubert class is determined to be
\[
h_v = i^*_v(X_w)=\sum_{Q_v(w)} \beta_{j_1}\cdots \beta_{j_{l(w)}}
 \]
where $\beta_{j_k}=s_{i_1}\cdots s_{i_{j_k-1}}\alpha_{j_k}$.
\end{thm}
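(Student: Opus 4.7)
The plan is to prove Billey's formula by induction on $l(v)$, showing that both sides --- the combinatorial sum
\[
\tilde h_v^w := \sum_{Q_v(w)} \beta_{j_1} \cdots \beta_{j_{l(w)}}
\]
on the right and the GKM localization $i_v^*(X_w)$ on the left --- satisfy the same one-step recurrence when a reduced word for $v$ is extended by one simple reflection. The base case $v = e$ holds because $Q_e(w)$ is empty unless $w = e$ (the empty reduced word admits only the empty subword, whose empty product equals $1$), matching the fact that $i_e^*(X_w) = \delta_{e,w}$, which follows from $X_w$ being supported on the Bruhat-upper set $\{v : v \geq w\}$.

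For the inductive step, fix a reduced word $v = s_{i_1} \cdots s_{i_{l(v)}}$ and set $v' = s_{i_1} \cdots s_{i_{l(v)-1}}$, so $v = v' s_i$ with $i = i_{l(v)}$. On the combinatorial side, partition $Q_v(w)$ by whether the last index equals $l(v)$. Subwords avoiding position $l(v)$ are in bijection with $Q_{v'}(w)$, and the root labels $\beta_{j_k}$ depend only on the first $l(v)-1$ letters of the reduced word, so they are unchanged. Subwords ending at position $l(v)$ have final factor $\beta_{l(v)} = v'(\alpha_i)$, and the remaining $l(w) - 1$ indices form a subword of $v'$ multiplying to $w s_i$; since $l(w s_i) = l(w) \pm 1$, such subwords exist only when $l(ws_i) = l(w) - 1$, in which case they form exactly $Q_{v'}(ws_i)$. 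This yields
\[
\tilde h_{v}^w = \tilde h_{v'}^w + \begin{cases} v'(\alpha_i) \cdot \tilde h_{v'}^{ws_i} & \text{if } l(ws_i) = l(w)-1, \\ 0 & \text{if } l(ws_i) = l(w)+1. \end{cases}
\]

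On the cohomological side, rearranging the formula in Theorem \ref{divided-diff}(4) yields $h_{v' s_i} = h_{v'} + v'(\alpha_i) (\Delta_i h)_{v'}$ for any $h \in H^*(\G;\Q)$. Applied to $h = X_w$ and combined with Theorem \ref{divided-diff}(2), which states $\Delta_i X_w = X_{ws_i}$ when $l(ws_i) = l(w)-1$ and vanishes otherwise, this produces the identical recurrence, so the induction closes. The main technical point --- conceptually easy but requiring care --- is the bookkeeping in the combinatorial split: one must verify that the labels $\beta_{j_k} = s_{i_1} \cdots s_{i_{j_k-1}} \alpha_{i_{j_k}}$ are genuinely unchanged for subwords not touching the last position, and that the label at position $l(v)$ is exactly the $v'(\alpha_i)$ appearing on the cohomological side. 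A pleasant byproduct of this argument is that, although Billey's sum is written using a chosen reduced word of $v$, the induction simultaneously shows the sum is independent of that choice, since the localization $i_v^*(X_w)$ is.
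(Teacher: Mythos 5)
Your proof is correct. Note that the paper does not prove this theorem at all---it is quoted from Billey's paper---so there is nothing internal to compare against; your induction on the length of the reduced word, matching the combinatorial recurrence (split subwords according to whether they use the last letter) against the cohomological recurrence $h_{v's_i}=h_{v'}+v'(\alpha_i)(\Delta_i h)_{v'}$ obtained from Theorem \ref{divided-diff}(4) together with $\Delta_i X_w$ from Theorem \ref{divided-diff}(2), is essentially the standard argument (close in spirit to the original Kostant--Kumar/Billey treatment). Two small points worth making explicit: (i) your partition of $Q_v(w)$ by the last index tacitly reads the elements of $Q_v(w)$ as increasing index sequences $j_1<\cdots<j_{l(w)}$, which is the intended meaning of ``subword'' here (the paper's displayed definition, which also has the typo $\alpha_{j_k}$ for $\alpha_{i_{j_k}}$, does not say so); and (ii) there is no circularity in invoking Theorem \ref{divided-diff}(4), since the GKM formula for $\Delta_i$ is established independently of Billey's formula, and your base case $i_e^*(X_w)=\delta_{e,w}$ is justified geometrically from the support of the opposite Schubert variety rather than from the formula being proved (the paper itself derives that identity from Billey's formula, so your independent justification is needed and is the right one).
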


\subsection{GKM to Schubert presentation}
\begin{prob}
 Given a series of polynomials $h=\{ h_v \mid v\in W \} \in H^*(\G;\Q)$
 in the GKM presentation. 
 Determine the coefficients $d_w\in H^*(BT;\Q)$ in the linear sum $\sum_{w\in W} d_w X_w$
 which represents the same class as $h$.
\end{prob}
First, by Theorem \ref{Schubert2GKM},
we have
\[
 i^*_e(X_w)= \begin{cases} 1 & (w=e) \\ 0 & (w\neq e) \end{cases}.
\]
Combining this with Theorem \ref{divided-diff}, we have
\begin{equation}\label{coeff-of-X_w}
 d_w = i^*_e \left(\Delta_w (\sum_{v\in W} d_v X_v) \right) = (\Delta_w(h))_e,
\end{equation}
and we obtain
\begin{prop}\label{GKM2Schubert}
 A series of polynomials $h\in H^*(\G;\Q)$
 represents the class
 \[
 \sum_{w\in W} (\Delta_w(h))_e X_w.
 \]
\end{prop}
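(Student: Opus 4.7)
The plan is to expand $h$ in the Schubert basis and apply $\Delta_w$ term by term. Since $\{X_v\}_{v\in W}$ is a free $H^*(BT;\Q)$-basis of $H^*_T(G/T;\Q)$, I write $h = \sum_{v\in W} d_v X_v$ with uniquely determined $d_v \in H^*(BT;\Q)$; the task is precisely to show $d_w = (\Delta_w(h))_e$, which is exactly what equation (\ref{coeff-of-X_w}) anticipates.

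First I would observe that each $\Delta_i$ is $H^*(BT;\Q)$-linear: geometrically it is $\pi^* \circ \pi_*$, a composition of maps of $H^*(BT;\Q)$-modules, and algebraically this is reflected in Theorem \ref{divided-diff}(3) by the fact that the reflection $s_i$ acts only on the $x$-variables while leaving the $t$-variables fixed. Hence $\Delta_w$ is also $H^*(BT;\Q)$-linear, and $\Delta_w(h) = \sum_v d_v\, \Delta_w(X_v)$. Next, iterating Theorem \ref{divided-diff}(2) along a reduced expression $w = s_{i_1}\cdots s_{i_{l(w)}}$, a short induction on $l(w)$ gives $\Delta_w(X_v) = X_{vw^{-1}}$ when $l(vw^{-1}) = l(v)-l(w)$ and $\Delta_w(X_v) = 0$ otherwise.

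Finally, Theorem \ref{Schubert2GKM} applied at the vertex $e \in W$, whose reduced word is empty, shows that the set $Q_e(u)$ of subwords multiplying to $u$ is empty unless $u=e$ (in which case it consists of the empty word, contributing $1$); thus $(X_u)_e = i_e^*(X_u) = \delta_{u,e}$. Combining this with the previous step yields $(\Delta_w(X_v))_e = \delta_{v,w}$, so $(\Delta_w(h))_e = d_w$, proving the proposition. The only point requiring real care is the $H^*(BT;\Q)$-linearity of $\Delta_w$, which must be invoked in order to move each coefficient $d_v$ past $\Delta_w$; the remainder of the argument is a direct assembly of Theorems \ref{divided-diff} and \ref{Schubert2GKM}.
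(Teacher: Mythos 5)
Your argument is correct and is essentially the paper's own proof: expand $h=\sum_v d_v X_v$ in the Schubert basis, use $H^*(BT;\Q)$-linearity of $\Delta_w$ together with Theorem \ref{divided-diff}(2) and the fact (from Theorem \ref{Schubert2GKM}) that $i_e^*(X_u)=\delta_{u,e}$, to get $(\Delta_w(h))_e=d_w$, which is exactly Equation (\ref{coeff-of-X_w}). You merely spell out the intermediate steps (linearity and the iterated formula $\Delta_w(X_v)=X_{vw^{-1}}$ or $0$) that the paper leaves implicit.
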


\subsection{Borel to Schubert presentation}
\begin{prob}
 Given a double polynomial $f(t;x)\in R_W$
 in the Borel presentation. 
 Determine the coefficients $d_w\in H^*(BT;\Q)$ in the linear sum $\sum_{w\in W} d_w X_w$
 which represents the same class as $f(t;x)$.
\end{prob}
We can use Equation (\ref{coeff-of-X_w}) again to obtain
\begin{prop}\label{Borel2Schubert}
 A double polynomial $f(t;x)\in R_W$
 represents the class
 \[
 \sum_{w\in W} ev(\Delta_w(f)) X_w.
 \]
\end{prop}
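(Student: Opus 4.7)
The plan is to reuse the identity (\ref{coeff-of-X_w}) already established for the GKM case and simply reinterpret each of its ingredients on the Borel side. Since the Schubert classes form a free $H^*(BT;\Q)$-basis of $H^*_T(G/T;\Q)$, we may uniquely write $f(t;x) = \sum_{v\in W} d_v X_v$ with coefficients $d_v \in H^*(BT;\Q)$. Applying the $H^*(BT;\Q)$-linear operator $\Delta_w$ and using Theorem \ref{divided-diff}(2), we have $\Delta_w(X_v) = X_{vw^{-1}}$ when all the intermediate length conditions drop, and zero otherwise. In particular, $\Delta_w(X_v)$ lives in the span of Schubert classes of cohomological degree $2(l(v)-l(w))$, so the only way to obtain a nonzero multiple of $X_e$ is $v=w$, in which case $\Delta_w(X_w)=X_e=1$. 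By Billey's formula (Theorem \ref{Schubert2GKM}), $i^*_e(X_u) = \delta_{u,e}$, so evaluating at the identity fixed point isolates exactly the desired coefficient, yielding $d_w = i^*_e(\Delta_w(f))$.

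Next, I would translate this identity into the Borel presentation using the two compatibilities already recorded in the paper. First, the observation preceding Proposition \ref{Borel2GKM} tells us that $i^*_e$ corresponds on the Borel side to the evaluation $ev$ sending $x_i \mapsto t_i$. Second, Theorem \ref{divided-diff}(3) gives an intrinsic formula for $\Delta_w$ as a composition of single divided differences acting on the $x$-variables of a double polynomial, so $\Delta_w(f)$ may be computed entirely within $R_W$. Combining these two facts with the preceding paragraph yields $d_w = ev(\Delta_w(f))$, which is exactly the proposition's formula.

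The argument is effectively immediate given the preceding material; the one point worth a sanity check is that the Schubert-basis rule in Theorem \ref{divided-diff}(2) and the $x$-variable quotient formula in Theorem \ref{divided-diff}(3) really do define the same operator on $R_W$. But this compatibility is part of the statement of Theorem \ref{divided-diff} that the paper already imports from the literature, so there is no genuine obstacle to overcome here.
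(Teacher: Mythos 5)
Your argument is correct and is essentially the paper's own: the paper proves this proposition by invoking Equation (\ref{coeff-of-X_w}) (i.e.\ $d_w = i^*_e(\Delta_w(\sum_v d_v X_v))$, derived exactly from Theorem \ref{divided-diff}(2) and $i^*_e(X_u)=\delta_{u,e}$) together with the identification $i^*_e = ev$ noted before Proposition \ref{Borel2GKM} and the Borel-side formula for $\Delta_w$ from Theorem \ref{divided-diff}(3). You merely spell out these same steps in more detail, so there is nothing to add or correct.
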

Computationally, the following properties are very useful:
\begin{lem}
\begin{enumerate}
\item $\Delta_w$ is $\Q[t_1,\ldots,t_n]$-linear.
\item $\Delta_i(f\cdot g) = \Delta_i(f)g + s_i(f) \Delta_i(g)$.
\item For a fundamental weight $\omega_j$, 
\[
 s_i(\omega_j) = \begin{cases} \omega_i-\alpha_i & (i=j) \\
 \omega_j & (i\neq j) \end{cases},
\]
and hence,
\[
 \Delta_i \left( g(t;x)\omega_i(x)^m \right)
 = g(t;x)\sum_{k=1}^m (\omega_i(x)-\alpha_i(x))^{k-1}\omega_i(x)^{m-k},
\]
where $g(t;x)$ is a polynomial on $t_j \ (1\le j\le n)$ and 
$\omega_j(x) \ (1\le j \le n, j\neq i)$.
\end{enumerate}
\end{lem}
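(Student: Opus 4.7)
My plan is to reduce all three assertions to the explicit formula for $\Delta_i$ in the Borel presentation given in Theorem \ref{divided-diff}(3), namely $\Delta_i(f)=(f-s_i(f))/(-\alpha_i(x))$, where $s_i$ acts only on the $x$-variables. For part (1), since any polynomial $p(t)\in\Q[t_1,\ldots,t_n]$ is fixed by $s_i$, I compute
\[
\Delta_i(p(t)f) = \frac{p(t)f - s_i(p(t))s_i(f)}{-\alpha_i(x)} = \frac{p(t)(f - s_i(f))}{-\alpha_i(x)} = p(t)\,\Delta_i(f),
\]
which together with the obvious $\Q$-linearity of the difference quotient shows $\Delta_i$ is $\Q[t_1,\ldots,t_n]$-linear. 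Composing along a reduced word of $w$ then transfers the property to $\Delta_w$.

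Part (2) is a telescoping computation. Writing $\Delta_i(fg) = (fg - s_i(f)s_i(g))/(-\alpha_i(x))$ and inserting $\pm s_i(f)g$ in the numerator yields
\[
\Delta_i(fg) = \frac{(f - s_i(f))\,g + s_i(f)\,(g - s_i(g))}{-\alpha_i(x)} = \Delta_i(f)\,g + s_i(f)\,\Delta_i(g),
\]
which is exactly the claimed Leibniz-type rule.

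For part (3), the identity $s_i(\omega_j) = \omega_j - \delta_{ij}\alpha_i$ follows at once from the general reflection formula $s_i(v) = v - \langle v,\alpha_i^\vee\rangle\alpha_i$ together with the defining pairing $\langle\omega_j,\alpha_i^\vee\rangle = \delta_{ij}$ of fundamental weights. Since the polynomial $g(t;x)$ is, by assumption, built from $t$-variables and weights $\omega_j(x)$ with $j\neq i$ (all $s_i$-fixed), one has $\Delta_i(g)=0$ and $s_i(g)=g$, so the Leibniz rule from (2) collapses to $\Delta_i(g\cdot\omega_i^m) = g\cdot\Delta_i(\omega_i^m)$. It then remains to expand $\Delta_i(\omega_i^m)$ using the factorisation $a^m - b^m = (a-b)\sum_{k=1}^m a^{m-k}b^{k-1}$ with $a=\omega_i(x)$ and $b=s_i(\omega_i(x))=\omega_i(x)-\alpha_i(x)$: the factor $a-b=\alpha_i(x)$ cancels against the $\alpha_i(x)$ in the denominator, leaving the stated sum. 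The only real obstacle here is careful bookkeeping of signs around the denominator $-\alpha_i(x)$; no deep idea is needed once (2) is in hand, since (1) and (3) then fall out as direct corollaries of the difference-quotient formula.
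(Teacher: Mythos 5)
Your overall strategy---deriving everything from the closed formula $\Delta_i(f)=(f-s_i(f))/(-\alpha_i(x))$ of Theorem \ref{divided-diff}(3)---is exactly what the paper intends (its own proof is only the remark that the statement is ``straightforward by definition''), and your arguments for parts (1) and (2) are complete and correct, as is the identity $s_i(\omega_j)=\omega_j-\delta_{ij}\alpha_i$ and the reduction of (3) to $\Delta_i(\omega_i(x)^m)$ via the Leibniz rule, using that $g(t;x)$ is $s_i$-invariant.

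The gap is in your final cancellation in (3). You say the factor $a-b=\alpha_i(x)$ ``cancels against the $\alpha_i(x)$ in the denominator, leaving the stated sum,'' but the denominator in Theorem \ref{divided-diff}(3) is $-\alpha_i(x)$, so the cancellation leaves an overall factor $-1$: carried out with the paper's operator, your computation gives
\[
\Delta_i\bigl(g(t;x)\,\omega_i(x)^m\bigr) \;=\; -\,g(t;x)\sum_{k=1}^m\bigl(\omega_i(x)-\alpha_i(x)\bigr)^{k-1}\omega_i(x)^{m-k},
\]
which is the negative of the displayed identity. The case $m=1$, $g=1$ already shows the problem: $\Delta_i(\omega_i(x))=\alpha_i(x)/(-\alpha_i(x))=-1$, whereas the stated formula would give $+1$; and $-1$ is the value forced by the rest of the paper, since the double Schubert polynomial of $s_i$ is $\omega_i(t)-\omega_i(x)$ (in the $\SU(3)$ demonstration, $\sch_{s_2}(t;x)=x_1+x_2-t_1-t_2$ with $\omega_2=-z_1-z_2$ for the roots $\alpha_i=z_{i+1}-z_i$ used there), and $\Delta_i(X_{s_i})=X_e=1$ requires $\Delta_i(\omega_i(x))=-1$. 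So the displayed formula of the Lemma is only correct if one reads the divided difference with the classical denominator $+\alpha_i(x)$; under the conventions actually stated in Theorem \ref{divided-diff}(3) it acquires a sign. Either way, the sign bookkeeping around $-\alpha_i(x)$, which you yourself flagged as the only delicate point, is precisely the step that does not close as written: you should either carry the minus sign through to the conclusion or state explicitly the convention under which it disappears.
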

The proof is straightforward by definition.
The last formula is quite efficient for computing the divided difference operators
on the Borel presentation.
\subsection{GKM to Borel presentation}
\begin{prob}\label{prob:GKM2Borel}
 Given a series of polynomials $h=\{ h_v \mid v\in W \} \in H^*(\G;\Q)$
 in the GKM presentation. 
 Find a double polynomial $f(t;x)\in R_W$
 which represents the same class as $h$.

 By Proposition \ref{Borel2GKM}, this is equivalent to
 the following interpolation problem:
 Find a double polynomial $f(t;x)$ which satisfies
 \[
 ev(v\circ f(t;x)) = h_v.
 \]
\end{prob}
Of course, this is solved 
by first converting $h$ to a linear sum of Schubert classes by
Proposition \ref{GKM2Schubert}, and then
use Theorem \ref{Schubert2Borel} to convert the result to 
the Borel presentation.
However, there is no direct, more efficient method known.
In fact, 
many researches to find double Schubert polynomials
have been done by solving this problem in a particular case of $X_{w_0}$:
By Theorem \ref{Schubert2GKM},
$i^*_v(X_{w_0})$ vanishes unless $v=w_0$
and $i^*_{w_0}(X_{w_0})$ is the product of all positive roots.
Even this seemingly easy case, it turned out to be very difficult in general.
Once a polynomial representative $\sch_{w_0}$ for $X_{w_0}$ is found,
$\sch_{w}$ for $w\neq w_0$ can be defined inductively by Theorem \ref{divided-diff}
to be $\sch_w := \Delta_{w^{-1}w_0}\sch_{w_0}$.

\section{Demonstration of the algorithm}\label{demo}
We give concrete examples for the algorithm introduced in the previous section.

We take $G/T = \SU(3)/T$ as our example.
We fix the notation as follows:
\begin{itemize}
\item The Weyl group $W$ is the symmetric group on $3$-letters and generated by the simple reflections (transpositions) $s_1$ and $s_2$.
\item $W = \{ e, s_1, s_2, s_1s_2, s_2s_1, s_1s_2s_1 \}$ and $w_0 = s_1s_2s_1 = s_2s_1s_2$.
\item With the one-line notation for the symmetric group
\[
s_1=(213), s_2=(132), s_1s_2=(231), s_2s_1=(312), s_1s_2s_1 = (321).
\]
\item The dual Lie algebra $\g^*$ of $G$ is the real two dimensional vector space 
\[
\g^* = \{ (z_1,z_2,z_3) \in \R^3 \mid z_1+z_2+z_3=0 \}.
\]
\item $W$ acts on $\g^*$ by the standard representation: 
\[s_1(z_1)=z_2, s_1(z_2)=z_1, s_1(z_3)=z_3, s_2(z_1)=z_1, s_2(z_2)=z_3, s_2(z_3)=z_2.\]
\item The set of positive roots is
\[
 \Phi^+ = \{ z_j - z_i \mid j>i \},
\]
and the simple roots are $\alpha_1 = z_2-z_1, \alpha_2=z_3-z_2$.
\item $H^*(BT;\Q) \simeq \dfrac{\Q[z_1,z_2,z_3]}{(z_1+z_2+z_3)} \simeq \Q[\alpha_1,\alpha_2]$
and $H^*(BT;\Q)^W \simeq \Q[e_2,e_3]$, where $e_2=z_1z_2+z_2z_3+z_3z_1$ and $e_3=z_1z_2z_3$.
\item The Borel presentation of $H^*_T(\SU(3)/T;\Q)$ is given by
\[
R_W = \dfrac{\Q[t_1,t_2,t_3,x_1,x_2,x_3]}
{ \left( t_1+t_2+t_3 ,x_1+x_2+x_3, (t_1t_2+t_2t_3+t_3t_1) - (x_1x_2+x_2x_3+x_3x_1), t_1t_2t_3-x_1x_2x_3 \right) }
\]
\item The GKM-graph is
\[
\raisebox{-35pt}{
\begin{xy}
(20,30) *{s_1s_2s_1}="p_0", 
(0,20) *{s_1s_2}="p_1", 
(40,20) *{s_2s_1}="p_2",
(0,10) *{s_1}="p_3", 
(40,10) *{s_2}="p_4",
(20,0) *{e}="p_5", 
\ar@{-} "p_0";"p_1"_{\alpha_2}
\ar@{-} "p_0";"p_2"^{\alpha_1}
\ar@{-} "p_0";"p_5"_(.2){\alpha_1+\alpha_2}
\ar@{-} "p_1";"p_4"_(0.8){\alpha_1}
\ar@{-} "p_1";"p_3"_{\alpha_1+\alpha_2}
\ar@{-} "p_2";"p_3"^(0.8){\alpha_2}
\ar@{-} "p_2";"p_4"^{\alpha_1+\alpha_2}
\ar@{-} "p_3";"p_5"_{\alpha_1}
\ar@{-} "p_4";"p_5"^{\alpha_2}
\end{xy}}
\text{or equivalently,}
\raisebox{-35pt}{
\begin{xy}
(20,30) *{(321)}="p_0", 
(0,20) *{(231)}="p_1", 
(40,20) *{(312)}="p_2",
(0,10) *{(213)}="p_3", 
(40,10) *{(132)}="p_4",
(20,0) *{(123)}="p_5", 
\ar@{-} "p_0";"p_1"_{t_3-t_2}
\ar@{-} "p_0";"p_2"^{t_2-t_1}
\ar@{-} "p_0";"p_5"_(.2){t_3-t_1}
\ar@{-} "p_1";"p_4"_(0.8){t_2-t_1}
\ar@{-} "p_1";"p_3"_{t_3-t_1}
\ar@{-} "p_2";"p_3"^(0.8){t_3-t_2}
\ar@{-} "p_2";"p_4"^{t_3-t_1}
\ar@{-} "p_3";"p_5"_{t_2-t_1}
\ar@{-} "p_4";"p_5"^{t_3-t_2}
\end{xy}
}
\]
\end{itemize}

Let $f(t;x) = t_1 x_1 x_2 \in R_W$.
By Proposition \ref{Borel2GKM}, the corresponding class $h=\{h_v\mid v\in W\}$ is determined 
by substituting
\[
x_i \leftarrow t_{v(i)} \quad (1\le i \le 3).
\]
Therefore, we have
\begin{equation}\label{eq:h}
 h = 
\raisebox{-40pt}{
 \begin{xy}
(20,30) *{t_1t_2t_3}="p_0", 
(0,20) *{t_1t_2t_3}="p_1", 
(40,20) *{t_1^2 t_3}="p_2",
(0,10) *{t_1^2 t_2}="p_3", 
(40,10) *{t_1^2 t_3}="p_4",
(20,0) *{t_1^2 t_2}="p_5", 
\ar@{-} "p_0";"p_1"
\ar@{-} "p_0";"p_2"
\ar@{-} "p_0";"p_5"
\ar@{-} "p_1";"p_4"
\ar@{-} "p_1";"p_3"
\ar@{-} "p_2";"p_3"
\ar@{-} "p_2";"p_4"
\ar@{-} "p_3";"p_5"
\ar@{-} "p_4";"p_5"
\end{xy}
}.
\end{equation}
As we mentioned earlier in Problem \ref{prob:GKM2Borel},
there is no direct algorithm known to convert in the opposite direction.

Next, let us compute $\Delta_{s_1s_2}(h) = \Delta_1\circ \Delta_2(h)$.
By Theorem \ref{divided-diff},
\[
 \Delta_{2}(h)_v = \dfrac{ h_v - h_{v s_2}}{-v(\alpha_2)} 
\]
so 
\[
\Delta_{2}(h)=
\raisebox{-40pt}{
 \begin{xy}
(20,30) *{t_1t_3}="p_0", 
(0,20) *{t_1t_2}="p_1", 
(40,20) *{t_1 t_3}="p_2",
(0,10) *{t_1 t_2}="p_3", 
(40,10) *{t_1^2}="p_4",
(20,0) *{t_1^2}="p_5", 
\ar@{-} "p_0";"p_1"
\ar@{-} "p_0";"p_2"
\ar@{-} "p_0";"p_5"
\ar@{-} "p_1";"p_4"
\ar@{-} "p_1";"p_3"
\ar@{-} "p_2";"p_3"
\ar@{-} "p_2";"p_4"
\ar@{-} "p_3";"p_5"
\ar@{-} "p_4";"p_5"
\end{xy}}, \quad
\Delta_1\circ \Delta_{2}(h)=
\raisebox{-40pt}{
 \begin{xy}
(20,30) *{t_1}="p_0", 
(0,20) *{t_1}="p_1", 
(40,20) *{t_1}="p_2",
(0,10) *{t_1}="p_3", 
(40,10) *{t_1}="p_4",
(20,0) *{t_1}="p_5", 
\ar@{-} "p_0";"p_1"
\ar@{-} "p_0";"p_2"
\ar@{-} "p_0";"p_5"
\ar@{-} "p_1";"p_4"
\ar@{-} "p_1";"p_3"
\ar@{-} "p_2";"p_3"
\ar@{-} "p_2";"p_4"
\ar@{-} "p_3";"p_5"
\ar@{-} "p_4";"p_5"
\end{xy}
}.
\]
Similarly, we can compute $\Delta_1 (h)=0$ and by Proposition \ref{GKM2Schubert}
\[
 \sum_{w\in W} \Delta_w(h)_e X_w = t_1^2t_2 + t_1^2 X_{s_2} + t_1 X_{s_1s_2}
\]
corresponds to the class $h\in H^*(\G;\Q)$.
We, of course, obtain the same presentation by applying Proposition \ref{Borel2Schubert}
directly to $f(t;x)$; it is easy to compute 
\[
\Delta_1(f)=0, \Delta_2(f)=t_1 x_2, \Delta_{s_1s_2}(f)= t_1.
\]
Now, we apply Theorem \ref{Schubert2Borel} to find a double polynomial representative for
\[
t_1^2t_2 + t_1^2 X_{s_2} + t_1 X_{s_1s_2}.
\]
We use the ordinary Schubert polynomial \cite{LS} for $\sigma_w$:
\[
 \sigma_{s_1s_2s_1}(z) = z_1^2z_2, \sigma_{s_1s_2}(z) = z_1z_2,
 \sigma_{s_2s_1}(z) = z_1^2, \sigma_{s_1}=z_1, \sigma_{s_2}=z_1+z_2,
 \sigma_1(z) = 1. 
\]
The set $P_k(w_0)$ can be computed as follows:
\[
P_1(w_0) = w_0,
P_2(w_0) = \{ (s_1s_2, s_1), (s_1, s_2s_1), (s_2s_1, s_2), (s_2, s_1s_2) \},
P_3(w_0) = \{ (s_1,s_2,s_1), (s_2,s_1,s_2) \}.
\]
Then, for example,
\begin{align*}
\sch_{w_0}(t;x) =&
\sum_{i=1}^{l(w)}
\sum_{(w_1, w_2, \ldots, w_i) \in P_i(w_0)} (-1)^{i}
\sigma_{w_1}(t)\sigma_{w_2}(t)\cdots \sigma_{w_{i-1}}(t) (\sigma_{w_i}(t) -\sigma_{w_i}(x)) \\
=& -(\sigma_{w_0}(t)-\sigma_{w_0}(x)) 
+\sigma_{s_1s_2}(t)(\sigma_{s_1}(t)-\sigma_{s_1}(x))
+\sigma_{s_1}(t)(\sigma_{s_2s_1}(t)-\sigma_{s_2s_1}(x)) \\
&
+\sigma_{s_2s_1}(t)(\sigma_{s_2}(t)-\sigma_{s_2}(x))
+\sigma_{s_2}(t)(\sigma_{s_1s_2}(t)-\sigma_{s_1s_2}(x)) \\
&
-\sigma_{s_1}(t)\sigma_{s_2}(t)(\sigma_{s_1}(t)-\sigma_{s_1}(x))
-\sigma_{s_2}(t)\sigma_{s_1}(t)(\sigma_{s_2}(t)-\sigma_{s_2}(x)) \\
=& (x_1-t_1)(x_1-t_2)(x_2-t_1)
\end{align*}
Similarly, we compute
\[
 \sch_{s_2}(t;x) = x_1+x_2-t_1-t_2, \sch_{s_1s_2}(t;x) = (x_1-t_1)(x_2-t_1)
\]
and
\[
t_1^2t_2 + t_1^2 \sch_{s_2} + t_1 \sch_{s_1s_2} = t_1 x_1 x_2 = f(t;x).
\]
In general, the resulting polynomial may differ from $f(t;x)$; they are congruent modulo the ideal.

\begin{rem}
For type $A$ case, 
Theorem \ref{Schubert2Borel} recovers the double Schubert polynomial in \cite{LS} from
the ordinary Schubert polynomial as polynomials (not only modulo the ideal).
\end{rem}

Next, we apply Theorem \ref{Schubert2GKM}
to find the representative $h\in H^*(\G;\Q)$ for
$t_1^2t_2 + t_1^2 X_{s_2} + t_1 X_{s_1s_2}$.
For example, the localization of $X_{s_2}$ at $s_2s_1s_2$ is computed as follows:
\[
 Q_{s_2s_1s_2}(s_2) = \{ (1), (3) \}.
\]
and
\[
 i^*_{s_2s_1s_2}(X_{s_2}) = \alpha_2 + s_2s_1(\alpha_2) = \alpha_1 + \alpha_2 = t_3-t_1.
 \]
Similarly, 
\[
i^*_{s_2s_1s_2}(X_{s_1s_2}) = s_2(\alpha_1)s_2s_1(\alpha_2) = (\alpha_1+\alpha_2)\alpha_1
=(t_3-t_1)(t_2-t_1).
\]
We have
\begin{align*}
 i^*(t_1^2t_2) &=
\raisebox{-40pt}{
  \begin{xy}
(20,30) *{t_1t_2}="p_0", 
(0,20) *{t_1t_2}="p_1", 
(40,20) *{t_1 t_2}="p_2",
(0,10) *{t_1 t_2}="p_3", 
(40,10) *{t_1t_2}="p_4",
(20,0) *{t_1t_2}="p_5", 
\ar@{-} "p_0";"p_1"
\ar@{-} "p_0";"p_2"
\ar@{-} "p_0";"p_5"
\ar@{-} "p_1";"p_4"
\ar@{-} "p_1";"p_3"
\ar@{-} "p_2";"p_3"
\ar@{-} "p_2";"p_4"
\ar@{-} "p_3";"p_5"
\ar@{-} "p_4";"p_5"
\end{xy}}, \quad
 i^*(X_{s_2}) =
\raisebox{-40pt}{
  \begin{xy}
(20,30) *{t_3-t_1}="p_0", 
(0,20) *{t_3-t_1}="p_1", 
(40,20) *{t_3-t_2}="p_2",
(0,10) *{0}="p_3", 
(40,10) *{t_3-t_2}="p_4",
(20,0) *{0}="p_5", 
\ar@{-} "p_0";"p_1"
\ar@{-} "p_0";"p_2"
\ar@{-} "p_0";"p_5"
\ar@{-} "p_1";"p_4"
\ar@{-} "p_1";"p_3"
\ar@{-} "p_2";"p_3"
\ar@{-} "p_2";"p_4"
\ar@{-} "p_3";"p_5"
\ar@{-} "p_4";"p_5"
\end{xy}}, \\
 i^*(X_{s_1s_2}) &=
\raisebox{-40pt}{
  \begin{xy}
(20,30) *{(t_2-t_1)(t_3-t_1)}="p_0", 
(0,20) *{(t_2-t_1)(t_3-t_1)}="p_1", 
(40,20) *{0}="p_2",
(0,10) *{0}="p_3", 
(40,10) *{0}="p_4",
(20,0) *{0}="p_5", 
\ar@{-} "p_0";"p_1"
\ar@{-} "p_0";"p_2"
\ar@{-} "p_0";"p_5"
\ar@{-} "p_1";"p_4"
\ar@{-} "p_1";"p_3"
\ar@{-} "p_2";"p_3"
\ar@{-} "p_2";"p_4"
\ar@{-} "p_3";"p_5"
\ar@{-} "p_4";"p_5"
\end{xy}}.
\end{align*}
Therefore, 
we have
\[
i^*(t_1^2t_2 + t_1^2 X_{s_2} + t_1 X_{s_1s_2})=
\raisebox{-40pt}{
\begin{xy}
(20,30) *{t_1t_2t_3}="p_0", 
(0,20) *{t_1t_2t_3}="p_1", 
(40,20) *{t_1^2 t_3}="p_2",
(0,10) *{t_1^2 t_2}="p_3", 
(40,10) *{t_1^2 t_3}="p_4",
(20,0) *{t_1^2 t_2}="p_5", 
\ar@{-} "p_0";"p_1"
\ar@{-} "p_0";"p_2"
\ar@{-} "p_0";"p_5"
\ar@{-} "p_1";"p_4"
\ar@{-} "p_1";"p_3"
\ar@{-} "p_2";"p_3"
\ar@{-} "p_2";"p_4"
\ar@{-} "p_3";"p_5"
\ar@{-} "p_4";"p_5"
\end{xy}},
\]
which coincides with Equation (\ref{eq:h}).

\section{Structure constants}
As we mentioned in Introduction,
we can compute the equivariant structure constants
through the conversion among three presentations.
Theoretically, to compute the structure constants in
\[
 X_u X_v = \sum_{w\in W}c_{uv}^w X_w,
\]
we can first convert the left hand side to either 
the GKM or the Borel presentation to
compute the product, and then, convert the result back to a linear sum of Schubert classes.
However, this scheme is not practical,
for example, for $G=E_8$ and $\dim(E_8/T)=2l(w_0)=240$.

Here, we describe how we can actually compute the structure constants
at least when $l(u)+l(v)$ is not large.

\subsection{Using the GKM presentation}
This method is introduced in \cite{Billey}.
By Theorem \ref{Schubert2GKM}, we have the following upper triangularity:
\[
i^*_{p} (X_w) = 0 \text{ unless } p\ge w.
\]
Hence, to compute $c_{uv}^w$ we have only to compute
\[
 i^*_{p}(X_u), i^*_{p}(X_v), i^*_p(X_q), i^*_w(X_u), i^*_w(X_v), i^*_w(X_w),
\]
for those $p,q\in W$ that $p,q<w$.
Then, we can determine $c_{uv}^q$ inductively on $q$
and
\[
 c_{uv}^w = i^*_{w}(X_u)i^*_w(X_v) - \sum_{q<w} c_{uv}^q i^*_w(X_q).
\]

We give another method which is much faster in many cases.

\subsection{Using the Borel presentation}
Since Proposition \ref{Borel2Schubert} is efficiently
computed, the bottle-neck is Theorem \ref{Schubert2Borel}.
To find $\sch_w(t;x)$, we only need $\sigma_v$ for $v\le w$.
However, if we use
\cite{BGG} described in Theorem \ref{Schubert2Borel},
we have to compute $\sigma_v$ for all $v\in W$ from the top class $\sigma_{w_0}$.
We give an alternative way to avoid this. 
The idea is very simple; 
the monomials in $\Q[x_1,\ldots,x_n]$ 
generate $H^*(G/T;\Q)\simeq \dfrac{\Q[x_1,\ldots,x_n]}{\Q^+[x_1,\ldots,x_n]^W}$.
Therefore, by solving the following linear system
\[
 \left\{ x_J = \sum_{l(v)=k} \Delta_v(x_J) \sigma_v \mid 
 x_J\in \Q[x_1,\ldots,x_n] \text{ is a monomial of degree } 2k 
 \right\}
\]
we obtain polynomial representatives $\sigma_v$ for all $v\in W$ such that $l(v)=k$.
Once $\sigma_w$ is found with this method, 
we can use $\sigma_v = \Delta_{v^{-1}w}\sigma_w$ for $v \le w$
and apply Theorem \ref{Schubert2Borel} to obtain $\sch_w(t;x)$.

\begin{ex}
Let $G=E_8$ the exceptional Lie group of rank $8$ with the Dynkin diagram: \\
 \setlength{\unitlength}{1mm}
     \begin{picture}(100,30)
        \multiput(15,20)(15,0){7}{\circle{3}}     
        \put(45,5){\circle{3}}     
        \multiput(16.5,20)(15,0){6}{\line(1,0){12}}
        \put(45,18.5){\line(0,-1){12}}     
        
        \put(15,25){\makebox(0,0)[t]{$\alpha_{1}$}}    
        \put(30,25){\makebox(0,0)[t]{$\alpha_{3}$}}
        \put(45,25){\makebox(0,0)[t]{$\alpha_{4}$}}
        \put(60,25){\makebox(0,0)[t]{$\alpha_{5}$}}
        \put(75,25){\makebox(0,0)[t]{$\alpha_{6}$}}
        \put(90,25){\makebox(0,0)[t]{$\alpha_{7}$}}
        \put(105, 25){\makebox(0,0)[t]{$\alpha_{8}$}}
        \put(53,5){\makebox(0,0)[r]{$\alpha_{2}$}}    
   \end{picture}
   
We determine the product $X_{s_4s_2}^2 \in H^*_T(E_8/T;\Q)$.
First, compute polynomial representatives $\sigma_v$ in the ordinary cohomology
for the subwords of $s_4s_2$, that is, $v=s_4s_2, s_4$ and $s_2$:
\begin{align*}
\sigma_{s_2} &= -5\alpha_1(z)-8\alpha_2-10\alpha_3-15\alpha_4-12\alpha_5-9\alpha_6-6\alpha_7-3\alpha_8 \\
\sigma_{s_4} &= -10\alpha_1-15\alpha_2-20\alpha_3-30\alpha_4-24\alpha_5-18\alpha_6-12\alpha_7-6\alpha_8 \\
\sigma_{s_4s_2} &= \sigma_{s_2}^2
\end{align*}
We can compute $\sch_{s_4s_2}(t;x)$ by Theorem \ref{Schubert2Borel}
\[
 \sch_{s_4s_2}(t;x) = -(\sigma_{s_4s_2}(t)-\sigma_{s_4s_2}(x)) + \sigma_{s_4}(t)(\sigma_{s_2}(t)-\sigma_{s_2}(x)) 
\]
 and we have
\begin{align*}
 X_{s_4s_2}^2 =& \sum_{v\le s_4s_2} \Delta_v(\sch_{s_4s_2}^2)(t;t) X_v
 = 
 (\alpha_2 \alpha_4+\alpha_4^2)X_{s_4s_2} + 
 (\alpha_2+\alpha_3+2\alpha_4)X_{s_3s_4s_2}\\ 
 &+
 (\alpha_2+2\alpha_4+\alpha_5) X_{s_5s_4s_2}+ X_{s_1s_3s_4s_2}+
 2 X_{s_3s_5s_4s_2} + X_{s_6s_5s_4s_2}.
\end{align*}
\end{ex}

\end{document}